\newtheorem{thm}{Theorem}[section]
\newtheorem{lem}[thm]{Lemma}
\newtheorem{cor}[thm]{Corollary}
\newcommand{\trd }{\mathop{\rm trans.deg}\nolimits}
\newcommand{\ch }{\mathop{\rm char}\nolimits}
\newcommand{\pl }{\mathop{\rm pl}\nolimits}
\newcommand{\id}{{\rm id}}
\newcommand{\lcs }{\mathop{\rm lc}_{\sigma }\nolimits}
\newcommand{\degs}{\deg_{\sigma }}
\newcommand{\zs}{\{ 0\} }
\newcommand{\sm}{\setminus}
\newcommand{\Q}{{\bf Q}}
\newcommand{\Z}{{\bf Z}}
\begin{document}
\title{A generalization of Nakai's theorem 
on locally finite iterative higher derivations}

\author{Shigeru Kuroda}

\date{}

\footnotetext{2010 {\it Mathematics Subject Classification}. 
Primary 13N15; Secondary 13A50, 14R10. }

\footnotetext{Partly supported by the Grant-in-Aid for 
Young Scientists (B) 24740022, 
The Ministry of Education, Culture, 
Sports, Science and Technology, Japan.}

\maketitle

\begin{abstract}
Let $k$ be a field of arbitrary characteristic. 
Nakai (1978) proved a structure theorem 
for $k$-domains admitting a nontrivial 
locally finite iterative higher derivation 
when $k$ is algebraically closed. 
In this paper, 
we generalize Nakai's theorem to cover the case 
where $k$ is not algebraically closed. 
As a consequence, 
we obtain a cancellation theorem of the following form: 
Let $A$ and $A'$ be finitely generated $k$-domains 
with $A[x]\simeq _kA'[x]$. 
If $A$ and $\bar{k}\otimes _kA$ are UFDs 
and $\trd _kA=2$, 
then we have $A\simeq _kA'$. 
This generalizes the cancellation theorem of 
Crachiola (2009). 
\end{abstract}

\section{Introduction}
\label{sect:intro}
\setcounter{equation}{0}

Let $A$ be a commutative ring with identity, 
and $A[x]$ the polynomial ring in one variable over $A$. 
A homomorphism $\sigma :A\to A[x]$ of rings 
is called an {\it exponential map} on $A$ 
if the following conditions hold for each $a\in A$, 
where $a_0,\ldots ,a_m\in A$ are such that 
$\sigma (a)=\sum _{i=0}^ma_ix^i$, 
and $y$ is a new variable: 

\smallskip 

\quad 
(E1) $a_0=a$. \qquad 
(E2) $\sum _{i=0}^m\sigma (a_i)y^i=\sum _{i=0}^ma_i(x+y)^i$ 
in $R[x,y]$. 

\smallskip 

\noindent 
For each exponential map $\sigma $ on $A$, 
a collection $(\delta _i)_{i=0}^{\infty }$ 
of endomorphisms of the additive group $A$ 
is defined by 
$\sigma (a)=\sum _{i\geq 0}\delta _i(a)x^i$ 
for each $a\in A$. 
The so obtained $(\delta _i)_{i=0}^{\infty }$  
is called 
a {\it locally finite iterative higher derivation} 
on $A$. 
The notions of exponential maps 
and locally finite iterative higher derivations 
are of great importance in Affine Algebraic Geometry, 
especially in the study of the Cancellation Problem. 
Since both notions are equivalent, 
we consider exponential maps.

For each $a\in A$, 
we define $\degs (a)$ 
and $\lcs(a)$ 
to be the degree and leading coefficient 
of $\sigma (a)$ as a polynomial in $x$ over $A$, 
respectively. 
By (E1), 
the {\it ring} 
$A^{\sigma }:=\{ a\in A\mid \sigma (a)=a\} $ 
{\it of} $\sigma $-{\it invariants} 
is equal to $\sigma ^{-1}(A)$. 
Assume that $\sigma $ is {\it nontrivial}, i.e., 
$A^{\sigma }\neq A$. 
Then, 
we have $\degs (a)\geq 1$ for each $a\in A\sm A^{\sigma }$. 
We call $s\in A$ a {\it local slice} of $\sigma $ 
if $\degs (s)$ is equal to the minimum among 
$\degs (a)$ for $a\in A\sm A^{\sigma }$. 
A local slice $s$ of $\sigma $ is called a {\it slice} 
of $\sigma $ if $\lcs (s)=1$. 
There always exists a local slice, 
but a slice does not exist in general. 
It is known that, 
if $s$ is a slice of $\sigma $, 
then $A$ is the polynomial ring in $s$ 
over $A^{\sigma }$ 
(cf.~Theorem~\ref{thm:slice}). 
Even if $\sigma $ has no slice, 
$A$ can be a polynomial ring 
in one variable over $A^{\sigma }$ 
in some special cases. 
For example, 
let $k$ be a field of arbitrary characteristic. 
Then, 
this is the case 
if $A$ is the polynomial ring in two variables 
over $k$ 
(cf.~\cite{Rentschler} when $\ch k=0$, 
\cite{Miyanishi nagoya} when $k$ is algebraically closed, 
and \cite{Kojima} for the general case).

Nakai~\cite[Thm.\ 1]{Nakai} proved the following theorem. 
Here, 
for a subring $R$ of $A$, 
we say that $\sigma $ is an exponential map 
{\it over} $R$ if $R$ is contained in $A^{\sigma }$.

\begin{thm}[Nakai]\label{thm:Nakai}
Let $A$ be a $k$-domain, 
and $\sigma $ a nontrivial exponential map on $A$ over $k$. 
Assume that $k$ is algebraically closed, 
$A^{\sigma }$ is a finitely generated PID over $k$, 
and every prime element of $A^{\sigma }$ is 
a prime element of $A$. 
Then, 
$A$ is the polynomial ring in one variable over $A^{\sigma }$. 
\end{thm}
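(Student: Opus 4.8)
The plan is to show that $A=A^{\sigma }[s]$ for a suitably chosen local slice $s$; since $s$ will be transcendental over $A^{\sigma }$, this is exactly the assertion that $A$ is the polynomial ring in $s$ over $A^{\sigma }$. First a few preliminaries. Comparing the coefficient of $y^{m}$ on the two sides of (E2), where $m:=\degs (a)$, one sees $\sigma (\lcs (a))=\lcs (a)$, so $\lcs (a)\in A^{\sigma }$ for every $a\in A$; and since $A$ is a domain, $\degs $ is additive and $\lcs $ multiplicative on products. Fix a local slice $s$, put $d:=\degs (s)$ (the minimum of $\degs $ on $A\sm A^{\sigma }$), and set $\mathfrak{p}:=\{ \lcs (a)\mid a\in A,\ \degs (a)=d\} \cup \zs $. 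Then $\mathfrak{p}$ is an ideal of $A^{\sigma }$: it is stable under multiplication by $A^{\sigma }$ because $\degs (ba)=d$ and $\lcs (ba)=b\,\lcs (a)$ for $b\in A^{\sigma }\sm \zs $, and stable under addition because, for $a,a'$ of $\sigma $-degree $d$, either $\lcs (a)+\lcs (a')=0$ or $a+a'$ has $\sigma $-degree $d$ with leading coefficient $\lcs (a)+\lcs (a')$. Since $A^{\sigma }$ is a PID, $\mathfrak{p}=(c)$ for some $c$, and by the definition of $\mathfrak{p}$ we may choose $s$ so that $\lcs (s)=c$. If $c$ is a unit then $s/c$ is a slice and the result follows from Theorem~\ref{thm:slice}; so assume $c$ is not a unit.

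Now localize at $c$. As $c\in A^{\sigma }$, the map $\sigma $ extends to $A[1/c]$ with $(A[1/c])^{\sigma }=A^{\sigma }[1/c]$, and dividing by powers of $c$ does not change $\degs $, so $d$ is still the minimal positive $\sigma $-degree on $A[1/c]$ and $t:=s/c$ is a slice there. By Theorem~\ref{thm:slice}, $A[1/c]=A^{\sigma }[1/c][t]=A^{\sigma }[1/c][s]$, a polynomial ring in $s$. Hence $A^{\sigma }[s]\subseteq A\subseteq A^{\sigma }[1/c][s]$, and it remains to prove $A=A^{\sigma }[s]$, i.e., that if $a\in A$ is written (uniquely) as $a=\sum _{i}r_{i}s^{i}$ with $r_{i}\in A^{\sigma }[1/c]$, then every $r_{i}$ lies in $A^{\sigma }$.

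Suppose not. Since $A^{\sigma }=\bigcap _{q}A^{\sigma }_{(q)}$ over the primes $q$ of $A^{\sigma }$, and since a prime $q$ not dividing $c$ contributes nothing (there $A^{\sigma }[1/c]\subseteq A^{\sigma }_{(q)}$), there is a prime element $p\mid c$ and an element of $A$ not lying in $A^{\sigma }_{(p)}[s]$. Put $\mathcal{O}:=A^{\sigma }_{(p)}$, a DVR with uniformizer $p$, and $B:=S^{-1}A$ with $S:=A^{\sigma }\sm (p)$, so that $\mathcal{O}[s]\subseteq B\subseteq B[1/p]=\mathrm{Frac}(A^{\sigma })[s]$. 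Multiplying a suitable element of $A$ by a power of $p$, we obtain $a\in B\sm \mathcal{O}[s]$ with $pa\in \mathcal{O}[s]$; write $pa=g(s)\in \mathcal{O}[s]$. By hypothesis $pA$ is prime, so $\bar{A}:=A/pA=B/pB$ is a $k$-domain, and its residue field $\mathcal{O}/p\mathcal{O}\cong A^{\sigma }/pA^{\sigma }$ equals $k$ because $A^{\sigma }$ is finitely generated over $k$ and $k$ is algebraically closed. Reducing $pa=g(s)$ modulo $p$ gives $\bar{g}(\bar{s})=0$ in $\bar{A}$ with $\bar{g}\in k[s]$ nonzero (since $a\notin \mathcal{O}[s]$), so $\bar{s}$ is algebraic over $k$ in the domain $\bar{A}$, whence $\bar{s}\in k$. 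Write $s=\lambda +ps_{1}$ with $\lambda \in k$ and $s_{1}\in A$. As $\lambda ,p\in A^{\sigma }$, we get $\sigma (s)-s=p(\sigma (s_{1})-s_{1})$; since $s_{1}\notin A^{\sigma }$, comparing $\sigma $-degrees and leading coefficients yields $\degs (s_{1})=d$ and $\lcs (s_{1})=c/p$. Then $c/p\in \mathfrak{p}=(c)$, which forces $p$ to be a unit — a contradiction. Therefore $A=A^{\sigma }[s]$, and $A$ is the polynomial ring in $s$ over $A^{\sigma }$.

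The step I expect to be most delicate is the denominator bookkeeping at the start of the third paragraph: deducing from $A\neq A^{\sigma }[s]$ a single prime $p\mid c$ together with an element $a$ satisfying $pa\in \mathcal{O}[s]$ but $a\notin \mathcal{O}[s]$, so that reduction modulo $p$ becomes meaningful. Everything else is either formal — the behaviour of $\degs $, $\lcs $, and $\sigma $ under localization — or a direct use of the three hypotheses: $A^{\sigma }$ being a PID makes $\mathfrak{p}$ principal; $k$ being algebraically closed (with $A^{\sigma }$ finitely generated over $k$) forces $\bar{s}\in k$; and prime elements of $A^{\sigma }$ remaining prime in $A$ makes $\bar{A}$ a domain, which is precisely what permits the passage from $\bar{s}\in k$ to $s=\lambda +ps_{1}$ and hence the final contradiction.
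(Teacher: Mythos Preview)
Your argument is correct, and its core is the same mechanism the paper uses in Section~\ref{sect:proof} to prove Theorem~\ref{thm:main} (from which Theorem~\ref{thm:Nakai} is then read off): pick a local slice $s$ whose leading coefficient generates the plinth ideal, invert that coefficient to get a polynomial ring, and then derive a contradiction by reducing modulo a prime factor $p$ of $\lcs(s)$, where the image of $s$ is forced into $k$ and hence $s=\lambda+ps_1$ with $\lcs(s_1)=\lcs(s)/p$, violating minimality. The organizational difference is that you localize at a single prime and work over the DVR $\mathcal{O}=A^{\sigma}_{(p)}$, whereas the paper stays global and tracks divisibility by the successive prime factors $p_1,\ldots,p_l$ of $\lcs(s)$; your packaging makes the denominator bookkeeping you flagged quite clean (note that your equality $A/pA=B/pB$ is genuinely true here, since the image of $A^{\sigma}$ in $A/pA$ is already $k$, so every element of $S$ becomes a unit), while the paper's global formulation is what lets the argument extend to the non-algebraically-closed setting of Theorem~\ref{thm:main}.
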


The purpose of this paper is to 
generalize Theorem~\ref{thm:Nakai}, 
and derive some useful consequences. 
One of our results 
implies the following theorem.

\begin{thm}\label{thm:cancel}
Let $k$ be any field, 
and $A$ and $A'$ 
finitely generated $k$-domains 
with $A[x]\simeq _kA'[x]$. 
If $A$ and $\bar{k}\otimes _kA$ are UFDs 
and $\trd _kA=2$, 
then we have $A\simeq _kA'$. 
Here, 
$\bar{k}$ is an algebraic closure of $k$. 
\end{thm}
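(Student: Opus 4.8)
The strategy is to exploit the isomorphism $A[x]\simeq_k A'[x]$ to transport an exponential map from $A[x]$ down to $A'$, and then invoke the generalized Nakai theorem (the main result of the paper, which strengthens Theorem 1.2 to drop the algebraic-closedness hypothesis on $k$). First I would fix a $k$-isomorphism $\varphi\colon A[x]\xrightarrow{\ \sim\ } A'[x]$ and consider the standard exponential map $\sigma_0$ on $A[x]$ over $A$ defined by $x\mapsto x+t$ (sending $A$ to constants); transporting this via $\varphi$ gives a nontrivial exponential map $\sigma'$ on $A'[x]$ over $k$ whose ring of invariants is $\varphi(A)$, so in particular $(A'[x])^{\sigma'}\simeq_k A$. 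The key observation is that $\varphi(A)$ is itself a polynomial ring in one variable over $A'$: indeed $A'[x]=\varphi(A[x])=\varphi(A)[\varphi(x)]$, and since $A'$ sits inside $A'[x]=\varphi(A)[\varphi(x)]$ with $\trd_k A'=\trd_k A[x]-1=2$, a degree/transcendence-degree argument shows $A'\subseteq\varphi(A)$ up to the right normalization, so that in fact $A\simeq_k(A'[x])^{\sigma'}$ and symmetrically one gets an exponential map on $A[x]$ with invariant ring isomorphic to $A'$.

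The heart of the matter is then to apply the generalized Nakai theorem to the ring $A[x]$ equipped with the exponential map $\tau$ coming from $\varphi^{-1}$ and the translation on $A'[x]$, whose invariant ring $B:=(A[x])^{\tau}$ is $\simeq_k A'$. To do so I must verify the hypotheses of the generalized Nakai theorem for $B$ inside $A[x]$: namely that $B$ is a finitely generated $k$-domain, that $B$ and $\bar k\otimes_k B$ satisfy the appropriate UFD / PID-type condition, and that primes of $B$ remain prime (or the analogous factoriality-preservation statement) in $A[x]$. Finite generation of $B$ follows from finite generation of $A$ together with the fact that an invariant ring of an exponential map on a finitely generated domain is finitely generated (a result of the kind available in this area); the factoriality inputs come from the hypotheses that $A$ and $\bar k\otimes_k A$ are UFDs, since $A[x]$ and $\bar k\otimes_k A[x]$ are then UFDs (Gauss), and one transfers factoriality back to $B$ using that $B$ is "co-UFD" inside the polynomial ring $A[x]$ via the retraction given by a local slice. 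Concluding, the generalized Nakai theorem forces $A[x]$ to be a polynomial ring in one variable over $B\simeq_k A'$, whence $A[x]\simeq_k A'[x]$ — but more to the point, running the argument symmetrically (or bootstrapping: once $A[x]=B[\text{slice}]$ with $B\simeq_k A'$, compare with $A[x]=A[x]$) yields that $A$ and $A'$ both arise as invariant rings that are forced to coincide; the cleanest route is that $A\simeq_k (A'[x])^{\sigma'}$ and the generalized Nakai theorem applied to $\sigma'$ on $A'[x]$ shows $A'[x]$ is a polynomial ring over $(A'[x])^{\sigma'}\simeq_k A$, and then the hypothesis $\trd_k A=2$ together with the UFD conditions pins down $A'\simeq_k A$ directly.

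The main obstacle I anticipate is twofold. First, checking that the \emph{invariant ring} $B$ inherits the precise hypotheses needed for the generalized Nakai theorem — in particular the condition on $\bar k\otimes_k B$ — is delicate, because tensoring with $\bar k$ does not obviously commute with taking $\sigma$-invariants, and one needs that $\bar k\otimes_k B = (\bar k\otimes_k A[x])^{\bar k\otimes\tau}$ or at least that the relevant factoriality survives base change; this is presumably where the hypothesis "$\bar k\otimes_k A$ is a UFD" (rather than just $A$) is consumed. Second, there is a bookkeeping subtlety in identifying which of $A$, $A'$ plays the role of the invariant ring versus the ambient ring, and ensuring the argument is not circular — the resolution is to set it up so that the generalized Nakai theorem is applied to $\sigma'$ on $A'[x]$ (whose ambient ring $A'[x]$ is manifestly a UFD with UFD base change, since $A$ is), producing $A'[x]=(\text{invariant ring})[s]$ with invariant ring $\simeq_k A$, and then a standard lemma on polynomial rings over two-dimensional UFDs (of the type that already appears implicitly in Crachiola's argument) gives $A'\simeq_k A$.
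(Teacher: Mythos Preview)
Your proposal has a genuine structural gap. The claim in the first paragraph that ``a degree/transcendence-degree argument shows $A'\subseteq\varphi(A)$'' is unjustified and is essentially the cancellation statement you are trying to prove; nothing forces the coefficient ring $A'$ to land inside the transported invariant ring $\varphi(A)$ (and note that your parenthetical ``$\varphi(A)$ is itself a polynomial ring in one variable over $A'$'' is impossible on transcendence-degree grounds, since $\trd_k\varphi(A)=\trd_kA'=2$). More seriously, you are applying the generalized Nakai theorem at the wrong level: you aim it at the three-dimensional ring $A'[x]$ with the transported map $\sigma'$, but even granting the hypotheses, the conclusion ``$A'[x]$ is a polynomial ring in one variable over $(A'[x])^{\sigma'}\simeq_kA$'' merely restates the hypothesis $A'[x]\simeq_kA[x]$ and yields nothing new. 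Your final appeal to ``a standard lemma on polynomial rings over two-dimensional UFDs'' is exactly where the real work lies, and your setup does not put you in a position to invoke any such lemma.

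The paper proceeds quite differently. It applies Corollary~\ref{cor:UFD} (the UFD consequence of the main theorem) not to $A[x]$ or $A'[x]$ but to the two-dimensional rings $A$ and $A'$ themselves. For this one must first verify that $A'$ and $\bar k\otimes_kA'$ are also UFDs with $\trd_kA'=2$: since $A'[x]\simeq_kA[x]$ and $\bar k\otimes_kA'[x]\simeq_{\bar k}\bar k\otimes_kA[x]$ are UFDs by Gauss, and $A'$ (resp.\ $\bar k\otimes_kA'$) is factorially closed in $A'[x]$ (resp.\ $\bar k\otimes_kA'[x]$), the hypotheses transfer. Now both $A$ and $A'$ satisfy the property ``every nontrivial exponential map makes the ring a polynomial ring in one variable over its invariants,'' and the paper then invokes Lemma~\ref{lem:Crachiola} --- Crachiola's reduction, resting on Crachiola--Makar-Limanov and Abhyankar--Eakin--Heinzer --- as a black box to conclude $A\simeq_kA'$. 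The step you were missing is precisely this: the Nakai-type result is used to establish a \emph{property of $A$ and $A'$}, and a separate cancellation lemma converts that property into the isomorphism.
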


This theorem is a generalization of 
Crachiola~\cite[Cor.\ 3.2]{tony} 
which says that 
$A[x]\simeq _kA'[x]$ implies $A\simeq _kA'$ 
if $A$ and $A'$ are finitely generated UFDs over 
an algebraically closed field $k$ 
with $\trd _kA=\trd _kA'=2$. 
One benefit of this generalization is that 
Theorem~\ref{thm:cancel} covers the case 
where $A$ is the polynomial ring 
in two variables over an arbitrary field $k$.

Thanks are due to Prof.\ Hideo Kojima 
for informing him of Nakai's paper.

\section{Main results}
\setcounter{equation}{0}

Comparing the coefficients of $y^m$, 
we have $\sigma (a_m)=a_m$ in (E2). 
Hence, 
$\lcs (a)$ belongs to $A^{\sigma }$ 
for each $a\in A$. 
If $\sigma $ is a nontrivial exponential map, 
then the {\it plinth ideal}
$$
\pl (\sigma ):=\{ \lcs (s)\mid 
\text{$s\in A$ is a local slice of $\sigma $}\} 
\cup \zs 
$$
is an ideal of $A^{\sigma }$. 
Actually, 
if $s,s'\in A$ are local slices 
with $\lcs (s)+\lcs (s')\neq 0$, 
and $a\in A^{\sigma }$ is such that $a\lcs (s)\neq 0$, 
then $s+s'$ and $as$ are local slices 
with $\lcs (s+s')=\lcs (s)+\lcs (s')$ 
and $\lcs (as)=a\lcs (s)$. 
The notion of plinth ideal 
already appeared in Nakai~\cite{Nakai}, 
although not called by this name. 
A local slice $s$ of $\sigma $ 
is said to be {\it minimal} 
if there does not exist 
$a\in \pl (\sigma )$ such that 
$\lcs (s)A^{\sigma }\subsetneqq aA^{\sigma }$. 
If $s\in A$ satisfies 
$\pl (\sigma )=\lcs (s)A^{\sigma }$, 
then $s$ is a minimal local slice of $\sigma $.

Now, 
let $k$ be any field, 
$A$ a commutative $k$-algebra, 
and $\sigma $ 
a nontrivial exponential map on $A$ over $k$. 
Set $\bar{A}:=\bar{k}\otimes _kA$ 
and $\bar{\sigma }:=\id _{\bar{k}}\otimes \sigma $, 
where $\bar{k}$ is an algebraic closure of $k$. 
Then, 
$\bar{\sigma }$ is an exponential map 
on $\bar{A}$ over $\bar{k}$ with 
$\bar{A}^{\bar{\sigma }}=\bar{k}\otimes _kA^{\sigma }$. 
In this notation, 
we have the following theorem.

\begin{thm}\label{thm:main}
Let $s\in A$ be a minimal local slice of $\bar{\sigma }$. 
Assume that $\lcs (s)$ 
is a nonzero divisor of $\bar{A}$, and 
is written as $p_1\cdots p_l$ with $l\geq 0$, 
where $p_1,\ldots ,p_l\in \bar{A}^{\bar{\sigma }}$ 
are such that 
$\bar{A}^{\bar{\sigma }}/p_i\bar{A}^{\bar{\sigma }}=\bar{k}$ 
and $\bar{A}/p_i\bar{A}$ is a domain 
for $i=1,\ldots ,l$. 
Then, 
$A$ is the polynomial ring in $s$ over $A^{\sigma }$. 
\end{thm}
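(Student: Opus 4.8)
The strategy is to reduce the problem to the slice case (Theorem~\ref{thm:slice}) after base change, and then descend. First I would pass to $\bar{A}$ and work with $\bar{\sigma }$. The hypothesis says $\lcs (s)=p_1\cdots p_l$ with each $p_i$ a prime-like element of $\bar{A}^{\bar{\sigma }}$ whose residue field is $\bar{k}$ and such that $\bar{A}/p_i\bar{A}$ is a domain. When $l=0$, $\lcs(s)$ is a unit, so (after scaling) $s$ is a slice of $\bar{\sigma }$, and Theorem~\ref{thm:slice} gives $\bar{A}=\bar{A}^{\bar{\sigma }}[s]$; the remaining work is the descent step below. The core of the argument is the inductive step $l\geq 1$: I want to show that localizing or passing modulo $p_l$ lets me strip off one prime factor while preserving all hypotheses, so that induction on $l$ applies.

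The key technical step is an analysis of the behaviour of $\bar{\sigma}$ modulo $p:=p_l$. Since $p\in\bar{A}^{\bar{\sigma}}$ and $\bar{A}/p\bar{A}$ is a domain, $\bar{\sigma}$ induces an exponential map $\sigma_p$ on the $\bar{k}$-domain $\bar{A}/p\bar{A}$, with invariant ring (a quotient of) $\bar{A}^{\bar{\sigma}}/p\bar{A}^{\bar{\sigma}}=\bar{k}$. An exponential map on a domain whose invariant ring is a field is either trivial or makes the domain a polynomial ring in one variable over that field (this follows from the slice case once one observes that over a field every nonzero local slice can be scaled to a slice). If $\sigma_p$ is nontrivial, the image of $s$ has positive $\sigma_p$-degree and, because $\lcs(s)$ maps to $0$ modulo $p$ — indeed $p\mid\lcs(s)$ — a degree-drop occurs: there is a local slice of $\bar{\sigma}$ whose leading coefficient generates a strictly larger ideal than $\lcs(s)A^{\bar\sigma}$ unless $p$ were a unit, contradicting minimality of $s$ unless we set things up so that instead the degree is preserved and we can replace $s$ by a local slice witnessing $\pl(\bar{\sigma})$ with one fewer factor. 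Making this dichotomy precise — showing that minimality of $s$ forces $\sigma_p$ to be \emph{trivial} on $\bar{A}/p\bar{A}$ — is the main obstacle, and it is where the hypotheses ``$\bar{A}/p_i\bar{A}$ is a domain'' and ``residue field $=\bar{k}$'' are essential: they guarantee the quotient is again a nice $\bar{k}$-domain on which the structure theory applies cleanly. Granting that $\sigma_p$ is trivial, $\bar{\sigma}$ acts trivially modulo $p$, which forces $p\bar{A}\supseteq(\sigma-\id)(\bar A)\cap\ldots$ and ultimately, combined with the plinth computation, $\lcs(s)/p$ to lie in $\pl(\bar{\sigma})$ after a suitable modification of $s$; applying the induction hypothesis to the new local slice (with $l-1$ factors) yields $\bar{A}=\bar{A}^{\bar{\sigma}}[s]$.

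Once $\bar{A}=\bar{A}^{\bar{\sigma}}[s]$ is established, it remains to descend to $A$. Since $s\in A$ and $A\subseteq\bar{A}=\bar{k}\otimes_k A$, and since $\bar{A}^{\bar{\sigma}}=\bar{k}\otimes_k A^{\sigma}$, the natural map $A^{\sigma}[s]\to A$ becomes an isomorphism after tensoring with $\bar{k}$ over $k$; as $\bar{k}/k$ is faithfully flat, $A^{\sigma}[s]\to A$ is itself an isomorphism, i.e.\ $A$ is the polynomial ring in $s$ over $A^{\sigma}$ (injectivity of $A^\sigma[s]\to A$ is automatic since $\degs$ is, e.g., a degree function making $s$ transcendental over $A^\sigma$, and surjectivity is what faithful flatness provides). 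I expect this descent to be routine; the substantive content — and the only place real care is needed — is the inductive degree-drop analysis in the previous paragraph, specifically ruling out a nontrivial induced exponential map on each $\bar{A}/p_i\bar{A}$ without contradicting the minimality of the local slice $s$.
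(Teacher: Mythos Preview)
Your descent step (faithful flatness of $\bar k/k$) and the base case $l=0$ are fine and match the paper. The problem is the inductive step on $l$. You propose that, once $\sigma_p$ is shown to be trivial on $\bar A/p\bar A$ with $p=p_l$, one can arrange ``$\lcs(s)/p\in\pl(\bar\sigma)$ after a suitable modification of $s$'' and then apply the induction hypothesis to a local slice whose leading coefficient has only $l-1$ prime factors. But any such local slice would have leading coefficient generating a strictly larger ideal of $\bar A^{\bar\sigma}$ than $\lcs(s)$ does, directly contradicting the minimality of $s$. Concretely, take $\bar A=\bar k[u,v]$ with $\bar\sigma(u)=u$ and $\bar\sigma(v)=v+ux$: here $s=v$ is a minimal local slice with $\lcs(s)=u$, the induced map on $\bar A/u\bar A=\bar k[v]$ is indeed trivial, yet $\pl(\bar\sigma)=u\,\bar k[u]$ does not contain $1=\lcs(s)/u$. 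So the step ``$\sigma_p$ trivial $\Rightarrow$ strip a factor'' is false, and since your other branch (``$\sigma_p$ nontrivial $\Rightarrow$ contradiction to minimality'') is also meant to contradict minimality, your dichotomy would, if it worked, rule out $l\ge1$ altogether. The induction on $l$ as you have set it up cannot get off the ground.

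The paper's proof does not induct on $l$. Instead it uses Corollary~\ref{cor:local slice} to write an arbitrary $b\in\bar A$ as $a^{-n}\sum_i b_is^i$ with $a=\lcs(s)$, $b_i\in\bar A^{\bar\sigma}$, and $n\ge0$ minimal, and shows $n=0$ by contradiction. If $n\ge1$, one divides all the $b_i$ by the largest common initial segment $p_1\cdots p_{u-1}$ to get coefficients $c_i\in\bar A^{\bar\sigma}$, not all divisible by $p_u$, satisfying $\sum_i c_is^i\in p_u\bar A$. Reducing modulo $p_u$ gives a nontrivial polynomial relation for $\bar s$ over $\bar k$ in the domain $\bar A/p_u\bar A$, so $\bar s\in\bar k=$ image of $\bar A^{\bar\sigma}$. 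The point you were circling around --- the role of minimality modulo $p_i$ --- is used here, but the correct consequence of minimality is simply that the image of $s$ in $\bar A/q\bar A$ \emph{never} lies in the image of $\bar A^{\bar\sigma}$ for a nonunit $q\in\bar A^{\bar\sigma}$ (this is the opening remark of the proof), not that the induced exponential map is trivial. That single observation, combined with the localized polynomial expansion, finishes the argument in one stroke.
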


Observe that every local slice of $\bar{\sigma }$ 
is written as a $\bar{k}$-linear combination 
of local slices of $\sigma $ 
and elements of $A^{\sigma }$. 
Hence, 
we get 
$\pl (\bar{\sigma })=\bar{k}\otimes _k\pl (\sigma )$. 
Thus, 
if $s\in A$ satisfies 
$\pl (\sigma )=\lcs (s)A^{\sigma }$, 
then we have 
$\pl (\bar{\sigma })=\bar{k}\otimes _k\pl (\sigma )
=\lcs (s)\bar{A}^{\bar{\sigma }}$, 
and so $s$ is a minimal local slice 
of $\bar{\sigma }$. 
Therefore, 
if $A$ admits a nontrivial exponential map 
$\sigma $ over $k$ 
with the following three conditions, 
then $A$ is a polynomial ring 
in one variable over $A^{\sigma }$ 
by Theorem~\ref{thm:main}:

\smallskip

\noindent (N1) $\pl (\sigma )$ is a principal ideal 
of $A^{\sigma }$ 
generated by a nonzero divisor of $\bar{A}$.

\noindent (N2) 
$\bar{A}^{\bar{\sigma }}$ is a PID 
with $\trd _k\bar{A}^{\bar{\sigma }}=1$.

\noindent (N3) 
$\bar{A}/p\bar{A}$ is a domain 
for every prime element $p$ of 
$\bar{A}^{\bar{\sigma }}$.

\smallskip

\noindent 
Since a finitely generated PID over a field $k$ 
has transcendence degree one over $k$, 
we see that 
the assumption of Theorem~\ref{thm:Nakai} 
implies (N1), (N2) and (N3). 
Therefore, 
we obtain Theorem~\ref{thm:Nakai} 
from Theorem~\ref{thm:main}.

Next, 
assume that $A$ is a domain. 
It is well known that 
$A^{\sigma }=\sigma ^{-1}(A)$ 
is {\it factorially closed} in $A$, 
i.e., 
$ab\in A^{\sigma }$ implies $a,b\in A^{\sigma }$ 
for each $a,b\in A\sm \zs $, 
since $A$ is factorially closed in $A[x]$, 
and $\sigma $ is injective by (E1). 
This implies that $(A^{\sigma })^*=A^*$, 
and every irreducible element of $A^{\sigma }$ 
is an irreducible element of $A$. 
Note that, 
if $p\in A^{\sigma }$ is a prime element of $A$, 
then $p$ is a prime element of $A^{\sigma }$, 
since $pA^{\sigma }=pA\cap A^{\sigma }$ 
is a prime ideal. 
Hence, 
if $A$ is a UFD, 
then $A^{\sigma }$ is also a UFD, 
and every prime element of $A^{\sigma }$ 
is a prime element of $A$. 
We also note that, 
if $A$ contains a field $k$, 
then every exponential map $\sigma $ on $A$ 
is an exponential map over $k$, 
since $k\sm \zs \subset A^*\subset A^{\sigma }$.

The following corollary is also a consequence of 
Theorem~\ref{thm:main}.

\begin{cor}\label{cor:UFD}
Let $A$ be a UFD over a field $k$, 
and $\sigma $ a nontrivial exponential map on $A$. 
If $\trd _kA=2$ and $\bar{k}\otimes _kA$ is a UFD, 
then $A$ is a polynomial ring 
in one variable over $A^{\sigma }$. 
\end{cor}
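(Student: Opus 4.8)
The plan is to deduce Corollary~\ref{cor:UFD} from Theorem~\ref{thm:main}. Following the discussion after that theorem, it suffices to verify conditions (N1)--(N3) for $\sigma $; equivalently, I must show that $A^{\sigma }$ is a PID (this gives (N1), since a generator of $\pl (\sigma )$ is then a nonzero element of the domain $\bar A$, hence a nonzero divisor), that $\bar A^{\bar\sigma }=\bar k\otimes _kA^{\sigma }$ is a PID of transcendence degree one over $\bar k$ (this gives (N2)), and that $\bar A/p\bar A$ is a domain for every prime element $p$ of $\bar A^{\bar\sigma }$ (this gives (N3)).

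Several of these points are immediate. Since $A$ is a UFD, so is $A^{\sigma }$, and since $\bar A=\bar k\otimes _kA$ is a UFD (in particular a domain), so is $\bar A^{\bar\sigma }$; moreover $\bar\sigma $ is nontrivial because $A^{\sigma }\subsetneq A$ and $\bar k\otimes _k(-)$ is faithfully flat. Being factorially closed in $\bar A$, the subring $\bar A^{\bar\sigma }$ has the property that each of its prime elements is a prime element of $\bar A$, so $\bar A/p\bar A$ is a domain; this is (N3). For the transcendence degrees, localizing $\sigma $ at $A^{\sigma }\sm \zs $ yields an exponential map on $\mathrm{Frac}(A^{\sigma })\otimes _{A^{\sigma }}A$ whose ring of invariants is the field $\mathrm{Frac}(A^{\sigma })$ and which has a slice (divide a local slice by its leading coefficient); by Theorem~\ref{thm:slice} this ring is a polynomial ring in one variable over $\mathrm{Frac}(A^{\sigma })$, so $\trd _kA^{\sigma }=\trd _kA-1=1$ and hence $\trd _{\bar k}\bar A^{\bar\sigma }=1$.

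The remaining work is to prove the key claim: a UFD $R$ containing a field $k$ with $\trd _kR=1$ is a PID, and $R/pR=k$ for every prime element $p$ when $k$ is algebraically closed. For the first assertion it is enough to prove $\dim R\le 1$: if a prime of $R$ had height at least two it would properly contain a principal prime $(p)$, so $R/pR$ would be a domain of positive Krull dimension, in particular not algebraic over $k$; but $R_{(p)}$ is a discrete valuation ring with fraction field $\mathrm{Frac}(R)$ and residue field $\mathrm{Frac}(R/pR)$, so Abhyankar's inequality forces $1+\trd _k\mathrm{Frac}(R/pR)\le \trd _k\mathrm{Frac}(R)=1$, a contradiction. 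A one-dimensional UFD is a PID: for a nonzero ideal $I$, dividing $I$ by a gcd of its elements yields an ideal with trivial gcd, which therefore lies in no maximal ideal — each such ideal being of height one, hence principal — so this ideal is the unit ideal and $I$ is principal. The same instance of Abhyankar's inequality shows $\mathrm{Frac}(R/pR)$ is algebraic over $k$, hence equals $\bar k$ when $k=\bar k$; as $(p)$ is then maximal, $R/pR=\bar k$. Taking $R=A^{\sigma }$ and $R=\bar A^{\bar\sigma }$, I obtain (N1) and (N2); and a minimal local slice $s\in A$ of $\bar\sigma $ with $\lcs (s)$ a product of prime elements of $\bar A^{\bar\sigma }$ having residue field $\bar k$ — which is exactly what Theorem~\ref{thm:main} requires, together with $\bar A/p_i\bar A$ being a domain — is produced from the identity $\pl (\sigma )=\lcs (s)A^{\sigma }$ (a consequence of $A^{\sigma }$ being a PID) via the remark following Theorem~\ref{thm:main}. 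The step I expect to be the main obstacle is the PID claim: although Abhyankar's inequality dispatches it cleanly, one has to argue without any finite-generation hypothesis on $A$, so the alternative route through Zariski's finiteness theorem, which would require $\mathrm{Frac}(A)$ to be finitely generated over $k$, is not directly available.
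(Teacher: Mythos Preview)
Your proposal is correct and follows essentially the same route as the paper: verify (N1)--(N3) and invoke Theorem~\ref{thm:main} via the remark preceding the corollary. The only difference is one of detail: the paper simply asserts that $\trd _kA^{\sigma }=1$ (citing Corollary~\ref{cor:local slice}) together with ``a UFD is a PID if every nonzero principal prime ideal is maximal'' forces $A^{\sigma }$ and $\bar A^{\bar\sigma }$ to be PIDs, whereas you supply the missing justification via the dimension inequality for a DVR (your ``Abhyankar's inequality''), and in doing so also make explicit why $\bar A^{\bar\sigma }/p\bar A^{\bar\sigma }=\bar k$, a point the paper leaves implicit in its passage from (N2) to the hypotheses of Theorem~\ref{thm:main}.
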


To see this, 
we check that the assumption of 
Corollary~\ref{cor:UFD} 
implies (N1), (N2) and (N3). 
Since $A$ and $\bar{A}$ are UFDs, 
we know that 
$A^{\sigma }$ and $\bar{A}^{\bar{\sigma }}$ 
are UFDs, 
and (N3) is fulfilled. 
Recall that a UFD is a PID if 
every nonzero principal prime ideal is maximal. 
Since $\trd _kA=2$ by assumption, 
we have 
$\trd _kA^{\sigma }=\trd _k\bar{A}^{\bar{\sigma }}=1$ 
(cf.~Corollary~\ref{cor:local slice}), 
and so $A^{\sigma }$ 
and $\bar{A}^{\bar{\sigma }}$ are PIDs. 
Therefore, 
we get (N1) and (N2).

Crachiola~\cite[Theorem 3.1]{tony} showed 
Corollary~\ref{cor:UFD} when $k$ is algebraically closed. 
He derived from this result 
the cancellation theorem~\cite[Cor.\ 3.2]{tony} 
mentioned in Section~\ref{sect:intro} 
by making use of 
Crachiola--Makar-Limanov~\cite[Thm.\ 3.1]{CML} 
and Abhyankar-Eakin-Heinzer~\cite[Thm.~3.3]{AEH}. 
His argument in fact proved the following statement 
for an arbitrary field $k$ 
(see the proof of \cite[Cor.\ 3.2]{tony}).

\begin{lem}\label{lem:Crachiola}
For $i=1,2$, 
let $A_i$ be a finitely generated $k$-domain 
with $\trd _kA_i=2$ having 
the following property$:$ 
If $A_i$ admits 
a nontrivial exponential map $\sigma $ over $k$, 
then $A_i$ is a polynomial ring 
in one variable over $A_i^{\sigma }$. 
Then, 
it holds that 
$A_1[x]\simeq _kA_2[x]$ implies 
$A_1\simeq _kA_2$. 
\end{lem}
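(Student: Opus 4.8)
The plan is to reduce the cancellation statement to the structure theorem already available, following Crachiola's strategy but being careful about where algebraic closedness was used. First I would consider an isomorphism $\varphi :A_1[x]\to A_2[x]$ over $k$ and extract from it a nontrivial exponential map on $A_2[x]$: the standard construction is to take the exponential map $\tau $ on $A_1[x]$ corresponding to the translation $x\mapsto x+y$ (i.e.\ $\tau (a)=a$ for $a\in A_1$ and $\tau (x)=x+t$ in $A_1[x][t]$) and transport it via $\varphi $ to an exponential map $\sigma '$ on $A_2[x]$ over $k$; its ring of invariants is $\varphi (A_1)$. In particular $A_2[x]$ admits a nontrivial exponential map over $k$ with invariant ring isomorphic (over $k$) to $A_1$, and symmetrically with the roles of $A_1$ and $A_2$ exchanged. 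This is the place where one uses that an isomorphism of polynomial rings produces exponential maps; it is formal and uses only (E1), (E2).

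Next I would apply the hypothesis on $A_2$. Restrict $\sigma '$ (or rather, produce from the $A_2[x]$-picture) a nontrivial exponential map on $A_2$ itself: here one uses that $A_2[x]$ admitting a nontrivial exponential map over $k$ forces $A_2$ to admit one too, because $\trd _kA_2=2$ means $\trd _kA_2[x]=3$, and the translation exponential map on $A_2[x]$ in the variable $x$ already witnesses nontriviality, so the hypothesis on $A_2$ gives $A_2=A_2^{\sigma }[y]$ for a suitable nontrivial $\sigma $. Actually the cleaner route, which is the one Crachiola uses, is: from $\varphi $ we get that $A_2[x]$ has invariant subring $\varphi (A_1)\cong _kA_1$ under a nontrivial exponential map, and $A_1$ has $\trd _k A_1=2$; one then shows $A_1$ itself admits a nontrivial exponential map over $k$ (the point being that a factorially closed subring of $A_2[x]$ of transcendence degree $2$ inside something of transcendence degree $3$ inherits an exponential map, via Crachiola--Makar-Limanov \cite[Thm.\ 3.1]{CML}). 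By the property assumed of $A_1$, we conclude $A_1=R_1[y_1]$ with $R_1:=A_1^{\sigma }$ and $\trd _kR_1=1$; symmetrically $A_2=R_2[y_2]$ with $\trd _kR_2=1$.

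Then I would substitute these into $A_1[x]\simeq _kA_2[x]$ to get $R_1[y_1,x]\simeq _kR_2[y_2,x]$, i.e.\ a $k$-isomorphism of polynomial rings in two variables over the one-dimensional $k$-domains $R_1$ and $R_2$. At this point the Abhyankar--Eakin--Heinzer theorem \cite[Thm.\ 3.3]{AEH} applies: for finitely generated algebras of transcendence degree one over a field, the base ring is determined up to isomorphism by the polynomial ring over it (the coefficient ring is an invariant of the polynomial extension, in arbitrary characteristic). Hence $R_1\simeq _kR_2$, and therefore $A_1=R_1[y_1]\simeq _kR_2[y_2]=A_2$, which is the desired conclusion. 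One must check the finite-generation hypotheses needed for \cite{AEH} and \cite{CML}: $R_i$ is finitely generated over $k$ because $A_i$ is and $R_i$ is factorially closed (hence $R_i=A_i^{\sigma }$ is a retract-like subring to which finite generation descends in this transcendence degree), but this should be spelled out.

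The main obstacle I expect is the step producing a nontrivial exponential map on $A_1$ (equivalently on $A_2$) from the isomorphism of the polynomial rings, i.e.\ correctly invoking \cite[Thm.\ 3.1]{CML}. The subtlety is that one does not directly get an exponential map on $A_1$; one gets one on $A_2[x]$ whose invariant ring is $A_1$, and one needs the nontriviality of the \emph{restricted} or \emph{induced} exponential map on $A_1$, which is exactly the content of the Crachiola--Makar-Limanov result about exponential maps descending to factorially closed subrings of the right transcendence degree. Everything else — the translation exponential map, the substitution, the appeal to \cite{AEH} — is routine, and the transcendence-degree bookkeeping uses only $\trd _k A_i=2$ together with Corollary~\ref{cor:local slice}.
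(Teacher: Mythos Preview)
Your overall plan matches the paper's: the proof is exactly Crachiola's argument from \cite[Cor.~3.2]{tony}, which combines \cite[Thm.~3.1]{CML} with \cite[Thm.~3.3]{AEH}. However, your description of what \cite[Thm.~3.1]{CML} says is wrong, and this produces a genuine gap. The theorem does \emph{not} assert that a factorially closed subring of $A_2[x]$ of the right transcendence degree ``inherits'' an exponential map; there is no such descent statement. What \cite{CML} gives is a statement about the \emph{base} of a polynomial extension: if an affine $k$-domain $B$ is rigid (admits no nontrivial exponential map), then every nontrivial exponential map on $B[x]$ has invariant ring equal to $B$. Applied with $B=A_2$ to the transported map $\sigma'$, whose invariant ring is $\varphi(A_1)$, this yields a dichotomy: either $\varphi(A_1)=A_2$ (whence $A_1\simeq_kA_2$ immediately), or $A_2$ admits a nontrivial exponential map. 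It is $A_2$, not $A_1$, that receives the map. Your sentence ``$A_2[x]$ admitting a nontrivial exponential map over $k$ forces $A_2$ to admit one too'' is also false as written: the translation in $x$ is always nontrivial on $A_2[x]$, yet $A_2$ may well be rigid.

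Once the dichotomy is stated correctly the rest of your outline goes through. In the second branch the hypothesis on $A_2$ gives $A_2=R_2[y_2]$ with $\trd_kR_2=1$; the symmetric argument (transport the translation on $A_2[x]$ through $\varphi^{-1}$ and apply \cite{CML} with $B=A_1$) gives either $A_1\simeq_kA_2$ again or $A_1=R_1[y_1]$ with $\trd_kR_1=1$. Then $R_1[y_1,x]\simeq_kR_2[y_2,x]$ with one-dimensional coefficient rings, and \cite[Thm.~3.3]{AEH} yields $R_1\simeq_kR_2$, hence $A_1\simeq_kA_2$. Your proposal never isolates the rigid case, and as written the argument would fail precisely when both $A_1$ and $A_2$ are rigid.
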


Under the assumption of Theorem~\ref{thm:cancel}, 
$A'[x]$ and $\bar{k}\otimes _kA'[x]$ are UFDs, 
and $\trd _kA'=2$. 
Since $A'$ and $\bar{k}\otimes _kA'$ are 
factorially closed in 
$A'[x]$ and $\bar{k}\otimes _kA'[x]$, 
respectively, 
it follows that 
$A'$ and $\bar{k}\otimes _kA'$ are also UFDs. 
Hence, 
using Lemma~\ref{lem:Crachiola}, 
we can derive Theorem~\ref{thm:cancel} 
from Corollary~\ref{cor:UFD}.

Let $A=R[x,y]$ be the polynomial ring 
in two variables over a domain $R$. 
Corollary~\ref{cor:UFD} also implies 
the result that, 
if $R$ is a field, 
and $\sigma $ is a nontrivial exponential map on $A$, 
then $A$ is a polynomial ring in one variable 
over $A^{\sigma }$ (cf.\ \S 1). 
The following theorem is a consequence of this result. 
We call $f\in R[x,y]$ a {\it coordinate} of $R[x,y]$ 
if there exists $g\in R[x,y]$ such that $R[x,y]=R[f,g]$. 
A domain $R$ is called 
an {\it HCF-ring} if, 
for any $a,b\in R$, 
there exists $c\in R$ such that $aR\cap bR=cR$. 
For example, 
UFDs are HCF-rings.

\begin{thm}\label{thm:HCF}
Let $R$ be an HCF-ring, 
$K$ the field of fractions of $R$, 
and $\sigma $ a nontrivial exponential map on $R[x,y]$ 
over $R$. 
Then, 
there exists $f\in R[x,y]$ 
such that $f$ is a coordinate of $K[x,y]$ 
and $R[x,y]^{\sigma }=R[f]$. 
\end{thm}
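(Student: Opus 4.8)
The plan is to reduce the statement over $R$ to the already-established statement over the field $K$, and then descend the resulting coordinate from $K[x,y]$ back to $R[x,y]$ using the HCF property. First I would extend scalars: let $\sigma_K := \id_K\otimes\sigma$ be the induced exponential map on $K[x,y] = K\otimes_R R[x,y]$ over $K$. This $\sigma_K$ is again nontrivial (it restricts to $\sigma$ on $R[x,y]$, which is nontrivial), so by the result quoted in \S1 (Rentschler/Miyanishi/Kojima, and reobtained here via Corollary 2.4), $K[x,y]$ is a polynomial ring in one variable over $K[x,y]^{\sigma_K}$; moreover $K[x,y]^{\sigma_K}$ has transcendence degree $1$ over $K$, so $K[x,y]^{\sigma_K} = K[f_0]$ for some $f_0$ that is a coordinate of $K[x,y]$. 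The first real task is to check $K[x,y]^{\sigma_K} = K\otimes_R R[x,y]^{\sigma}$; this follows exactly as in the displayed remark before Theorem 2.2 ($\bar A^{\bar\sigma} = \bar k\otimes_k A^\sigma$) — every local slice of $\sigma_K$ is a $K$-linear combination of local slices of $\sigma$ and invariants, and invariants are detected coefficientwise.

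Next I would produce a good generator. Pick any nonzero $h\in R[x,y]^\sigma$ that is nonconstant in the $\sigma$-sense (e.g. $h = \lcs(s)$ for a local slice $s$, or simply any element of $R[x,y]^\sigma\setminus R$; such exists since $R[x,y]^\sigma\supsetneq R$ as $\trd_R R[x,y]^\sigma = 1$). Then $h\in K[f_0]$, so $h = c_n f_0^n + \cdots + c_0$ with $c_i\in K$, and by a standard normalization (absorbing the leading coefficient and, if $\ch K = p>0$, taking care that $f_0$ is a $\sigma_K$-variable so $K[f_0]=K[f_0']$ for any affine transform $f_0' = af_0+b$, $a\in K^*$) I may assume $f_0\in R[x,y]^\sigma$ after clearing denominators. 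So $K\cdot R[x,y]^\sigma = K[f_0]$ with $f_0\in R[x,y]^\sigma$. The content of the theorem is now that $R[x,y]^\sigma$ is not just contained in $K[f_0]$ but equals $R[f]$ for a suitable $f$.

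The descent step is the heart of the matter, and this is where the HCF hypothesis enters. The ring $S := R[x,y]^\sigma$ is a subring of $K[f_0]\cong K[T]$ containing $R$, with $K\cdot S = K[f_0]$. I claim $S$ is generated over $R$ by a single element: this is precisely the content of the Abhyankar–Eakin–Heinzer type (or rather Evyatar–Zaks / Bass–Connell–Wright style) result that a retract, or more elementarily, an $R$-subalgebra $S$ of $R[T]$ with $\mathrm{Frac}(R)\cdot S = \mathrm{Frac}(R)[T]$, is itself a polynomial ring $R[f]$ in one variable provided $R$ is an HCF-ring — one builds $f$ as a generator of the "content ideal" construction, repeatedly using that $aR\cap bR$ is principal to combine elements of $S$ of the same $f_0$-degree into one of lower degree. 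Concretely: $S$ factorially closed in $R[x,y]$ forces $R = S\cap K = S\cap R = R$ (consistency) and, more usefully, for each degree $d$ the set $I_d := \{$ leading coefficients in $f_0$ of elements of $S$ of $f_0$-degree $\le d\}\cup\{0\}$ is an $R$-submodule of $K$; the HCF condition lets one show each $I_d$ is cyclic, and a minimal-degree element realizing the generator of $I_1/I_0$ gives $f$ with $S = R[f]$. Finally $f$ is a coordinate of $K[x,y]$ because $K[f] = K[x,y]^{\sigma_K}$ is, as established above.

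The main obstacle I expect is the descent: carefully proving that an $R$-subalgebra of $R[x,y]$ whose $K$-span is $K[f_0]$ must be $R[f]$ when $R$ is an HCF-ring, controlling the inductive combination of equal-degree elements and verifying that the HCF hypothesis (rather than full UFD) suffices. One must also be slightly careful in positive characteristic that the chosen $f_0$ can be taken inside $R[x,y]^\sigma$ and remains a $\sigma_K$-coordinate after the normalization; but this is routine once one notes any affine reparametrization of a variable is again a variable.
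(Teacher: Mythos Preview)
Your plan is correct and rests on the same two pillars as the paper's proof --- the field-case structure theorem for exponential maps on $K[x,y]$, and Abhyankar--Eakin--Heinzer \cite[Prop.\ 4.8]{AEH} for factorially closed subrings over an HCF-ring --- but you invoke them in the reverse order. The paper applies AEH first and directly: $R[x,y]^{\sigma}$ is factorially closed in $R[x,y]$ with $\trd_R R[x,y]^{\sigma}=1$ (Corollary~\ref{cor:local slice}), so since $R$ is an HCF-ring, \cite[Prop.\ 4.8]{AEH} yields $R[x,y]^{\sigma}=R[f]$ at once, with no prior passage to $K$ and no need to locate or normalize a generator $f_0$. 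Only then does the paper extend $\sigma$ to $K[x,y]$ and use the field case to conclude $K[x,y]=K[f,g]$. This ordering sidesteps precisely the step you flag as the main obstacle: the ``descent from $K[f_0]$'' is already the content of \cite[Prop.\ 4.8]{AEH}, whose hypotheses (factorially closed, transcendence degree one) are phrased intrinsically over $R$. One caution on your sketch: the AEH-type assertion you write --- that an $R$-subalgebra $S\subset R[T]$ with $K\cdot S=K[T]$ is $R[f]$ when $R$ is HCF --- is false as stated (take $R=\Z$, $S=\Z[2T,T^2]$), and in any case $S=R[x,y]^{\sigma}$ need not sit inside $R[f_0]$, only inside $K[f_0]$; it is the factorial closedness of $S$ in the ambient polynomial ring, which you do invoke further down, that makes the AEH argument go through.
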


When $R$ contains $\Q $, 
this result is found in 
Freudenburg~\cite[Thms.\ 4.11 and 4.13]{Fbook}. 
The proof of Theorem~\ref{thm:HCF} is similar: 
$R[x,y]^{\sigma }$ is factorially closed in $R[x,y]$, 
and is of transcendence degree one over $R$ 
(cf.~Corollary~\ref{cor:local slice}). 
Since $R$ is an HCF-ring by assumption, 
this implies that 
$R[x,y]^{\sigma }=R[f]$ for some $f\in R[x,y]$ 
by Abhyankar-Eakin-Heinzer~\cite[Prop.\ 4.8]{AEH}. 
Let $\tilde{\sigma }$ be the extension of 
$\sigma $ to $K[x,y]$. 
Then, 
we have $K[x,y]^{\tilde{\sigma }}=K[f]$, 
and $K[x,y]=K[x,y]^{\tilde{\sigma }}[g]=K[f,g]$ 
for some $g\in K[x,y]$ 
by the result mentioned above.

\section{Proof of Theorem~\ref{thm:main}}
\label{sect:proof}
\setcounter{equation}{0}

First, 
we remark that, 
if $s$ is a minimal local slice 
with $\lcs(s)$ a nonzero divisor of $A^{\sigma }$, 
then the image of $s$ in $A/qA$ 
is not contained in 
the image of $A^{\sigma }$ 
for any $q\in A^{\sigma }\sm A^*$. 
In fact, 
if $s-b=qs'$ holds for some $b\in A^{\sigma }$ and $s'\in A$, 
then we have $\sigma (s)-b=q\sigma (s')$, 
and so $\lcs (s)=qc$ for some $c\in A$. 
Since $\lcs (s)$ is a nonzero divisor of $A^{\sigma }$, 
it follows that so is $q$. 
Since $\lcs (s')$ belongs to $A^{\sigma }$, 
we get $\degs (s)=\degs (s')$ and 
$\lcs (s)=q\lcs (s')$, 
contradicting the minimality of $s$.

Now, 
let us prove Theorem~\ref{thm:main}. 
By assumption, 
$s$ is a minimal local slice of $\bar{\sigma }$, 
and $a:=\lcs (s)=p_1\cdots p_l$ 
is a nonzero divisor of $\bar{A}$. 
Hence, 
$\bar{A}[a^{-1}]$ is the polynomial ring in $s$ 
over $\bar{A}^{\bar{\sigma }}[a^{-1}]$ 
(cf.~Corollary~\ref{cor:local slice}). 
Since $A^{\sigma }$ is contained in 
$\bar{A}^{\bar{\sigma }}[a^{-1}]$, 
it suffices to verify 
$A=A^{\sigma }[s]$. 
Since $A\supset A^{\sigma }[s]$, 
we prove that 
$\bar{k}\otimes _kA=\bar{k}\otimes _kA^{\sigma }[s]$, 
that is, 
$\bar{A}=\bar{A}^{\bar{\sigma }}[s]$. 
This is clear if $a=1$. 
So assume that $l\geq 1$. 
We remark that, 
if $c\in \bar{A}$ satisfies $p_1\cdots p_ic=d$ for some 
$1\leq i\leq l$ and $d\in \bar{A}^{\bar{\sigma }}$, 
then $c$ belongs to $\bar{A}^{\bar{\sigma }}$. 
In fact, 
since $p_1,\ldots ,p_i$ are elements of $\bar{A}^{\bar{\sigma }}$ 
by assumption, 
we have 
$p_1\cdots p_i\bar{\sigma }(c)=d$, 
and so 
$\bar{\sigma }(c)=(p_1\cdots p_i)^{-1}d=c$ 
in $\bar{A}[a^{-1}]$. 
Similarly, 
$p_i\bar{A}\cap \bar{A}^{\bar{\sigma }}
=p_i\bar{A}^{\bar{\sigma }}$ 
holds for each $i$. 
Now, 
take any $b\in \bar{A}$, 
and write $b=a^{-n}\sum _{i\geq 0}b_is^i$, 
where $n\geq 0$ and $b_i\in \bar{A}^{\bar{\sigma }}$ 
for each $i$. 
We may assume that $n$ is minimal among such expressions. 
To conclude $b\in \bar{A}^{\bar{\sigma }}[s]$, 
we show that $n=0$ by contradiction. 
Suppose that $n\geq 1$. 
Then, 
$\sum _{i\geq 0}b_is^i=a^nb$ belongs to $a\bar{A}$. 
Let $1\leq u\leq l+1$ 
be the maximal number satisfying 
$\{ b_i\mid i\geq 0\} \subset p_1\cdots p_{u-1}\bar{A}$. 
Then, 
by the minimality of $n$, 
we have $1\leq u\leq l$. 
Set 
$c_i:=(p_1\cdots p_{u-1})^{-1}b_i\in \bar{A}^{\bar{\sigma }}$ 
for each $i$. 
Then, 
$\sum _{i\geq 0}c_is^i$ belongs to $p_u\bar{A}$, 
but $c_{i_0}$ does not belong to $p_u\bar{A}$ 
for some $i_0$ by the maximality of $u$. 
By assumption, 
$\bar{A}/p_u\bar{A}$ is a domain. 
Moreover, 
the image of $\bar{A}^{\bar{\sigma }}$ 
in $\bar{A}/p_u\bar{A}$ 
is equal to $\bar{k}$, 
since 
$p_u\bar{A}\cap \bar{A}^{\bar{\sigma }}
=p_u\bar{A}^{\bar{\sigma }}$, 
and 
$\bar{A}^{\bar{\sigma }}/p_u\bar{A}^{\bar{\sigma }}=\bar{k}$ 
by assumption. 
Hence, 
the image of $s$ in $\bar{A}/p_u\bar{A}$ 
is algebraic over $\bar{k}$, 
and thus belongs to the image of $\bar{A}^{\bar{\sigma }}$ 
in $\bar{A}/p_u\bar{A}$. 
Since $\lcs (s)$ is a nonzero divisor of $\bar{A}$, 
this contradicts the minimality of $s$ by the first remark. 
Therefore, 
$b$ belongs to $\bar{A}^{\bar{\sigma }}[s]$, 
proving $\bar{A}=\bar{A}^{\bar{\sigma }}[s]$. 
This completes the proof of Theorem~\ref{thm:main}.

\section{Appendix}
\setcounter{equation}{0}

Let $A$ be any commutative ring, 
and $\sigma $ a nontrivial exponential map on $A$. 
In this appendix, 
we prove the following theorem 
for the lack of a suitable reference 
(cf.\ e.g.~\cite[Lem.\ 2.2]{CML} when $A$ is a domain).

\begin{thm}\label{thm:slice}
If $\sigma $ has a slice $s$, 
then $A$ is the polynomial ring in $s$ over $A^{\sigma }$. 
\end{thm}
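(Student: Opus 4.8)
The plan is to write $R:=A^{\sigma }$ and $n:=\degs (s)$, to check that $R[s]$ is a polynomial ring over $R$, and then to prove $A=R[s]$ by induction on $\degs $. That $R[s]$ is a polynomial ring is immediate from $\lcs (s)=1$: then $\sigma (s)$ is monic of degree $n$ in $x$, so $\sigma (s)^i$ is monic of degree $ni$, and hence $\sigma \bigl(\sum _i b_i s^i\bigr)=\sum _i b_i\sigma (s)^i$ has leading $x$-coefficient $b_d$ whenever $d$ is the top index with $b_d\neq 0$; so no nontrivial relation over $R$ holds. For $A=R[s]$ I would argue: if $\degs (a)=0$ then $a\in R$; if $\degs (a)=m>0$ and $n\mid m$, then with $b:=\lcs (a)\in R$ the element $bs^{m/n}\in R[s]$ satisfies $\degs (bs^{m/n})=m$ and $\lcs (bs^{m/n})=b$, so $\degs (a-bs^{m/n})<m$ and the induction hypothesis applies to $a-bs^{m/n}$. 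So the whole theorem reduces to showing $n\mid \degs (a)$ for every $a\in A$ --- and this is genuinely needed, since every nonzero element of $R[s]$ has $\sigma $-degree divisible by $n$, so an element whose degree is not a multiple of $n$ could never be corrected inside $R[s]$.

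For the divisibility I would use two facts about $\sigma $. First, comparing coefficients in (E2) gives the iterative identity $\delta _i\circ \delta _j=\binom{i+j}{i}\delta _{i+j}$; in particular $\sigma (\delta _i(a))=\sum _j\binom{i+j}{i}\delta _{i+j}(a)x^j$, so for $i\geq 1$ we have $\delta _i(a)=0$ if $i>\degs (a)$, and $\degs (\delta _i(a))\leq \degs (a)-i$ otherwise. Applying this to $s$ with $0<i<n$ gives $\degs (\delta _{n-i}(s))\leq i<n$; since $n$ is the least positive value of $\degs $ (as $s$ is a local slice), this forces $\delta _{n-i}(s)\in R$, and therefore $0=\delta _i(\delta _{n-i}(s))=\binom{n}{i}\delta _n(s)=\binom{n}{i}$ in $A$. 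Equivalently $(1+x)^n=1+x^n$ in $A[x]$, whence for any $q\geq 0$ and $0<r<n$, comparing the coefficient of $x^r$ in $(1+x)^{qn+r}=(1+x^n)^q(1+x)^r$ yields $\binom{qn+r}{r}=1$ in $A$.

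Finally, suppose for contradiction that $\degs (a)=m$ with $n\nmid m$; write $m=qn+r$ with $0<r<n$, and note $q\geq 1$ because $m\geq n$. By the degree estimate $\degs (\delta _{qn}(a))\leq r<n$, so $\delta _{qn}(a)\in R$ and hence $\delta _r(\delta _{qn}(a))=0$; but the iterative identity together with $\binom{m}{r}=1$ in $A$ gives $\delta _r(\delta _{qn}(a))=\binom{m}{r}\delta _m(a)=\lcs (a)$, contradicting $\degs (a)=m$. This establishes $n\mid \degs (a)$ for all $a$ and completes the proof. The one substantive point --- the part that really goes beyond the domain case recorded in \cite[Lem.\ 2.2]{CML}, where the naive degree function suffices --- is this divisibility, and behind it the vanishing $\binom{n}{i}=0$ in $A$ for $0<i<n$; everything else is formal and needs no hypothesis on zero divisors.
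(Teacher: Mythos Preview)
Your argument is correct and shares the paper's overall architecture: prove algebraic independence of $s$ over $A^{\sigma}$ from the monicity of $\sigma(s)$, then prove $A=A^{\sigma}[s]$ by induction on $\degs$, reducing everything to the divisibility $n\mid\degs(a)$. The genuine difference lies in how that divisibility is established. The paper first shows (Lemma~\ref{lem:iterative}~(iii)) that $\sigma(s)-s$ is additive, and then invokes Lemma~\ref{lem:binom} with $a=1$ to conclude that either $n=1$ or $\ch A=p$ is prime with $n=p^d$; it then appeals to Lemma~\ref{lem:iterative}~(ii) to produce, from any $a$ with $\degs(a)=m=lp^e$, an element of $\sigma$-degree exactly $p^e$, so minimality forces $p^d\mid p^e$. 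Your route bypasses the characteristic analysis entirely: from the iterative law $\delta_i\circ\delta_j=\binom{i+j}{i}\delta_{i+j}$ you extract $\binom{n}{i}=0$ in $A$ for $0<i<n$, hence $(1+x)^n=1+x^n$ in $A[x]$, and then read off $\binom{qn+r}{r}=1$ in $A$ and finish with $\delta_r\circ\delta_{qn}$ applied to $a$. This is slightly slicker in that it never names $\ch A$ or splits into cases; the paper's approach, on the other hand, yields as a byproduct the structural information that a slice degree $\geq 2$ forces $\ch A$ to be prime and $n$ a power of it, which may be of independent interest.
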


First, 
note that {\rm (E1)} and {\rm (E2)} 
imply the following statements.

\begin{lem}\label{lem:iterative}
\noindent{\rm (i)} For each $0\leq i\leq m$, 
we have $\degs (a_i)\leq m-i$.

\noindent{\rm (ii)} 
Assume that $p:=\ch (A)$ is a prime number, 
and write $m=lp^e$, 
where $e\geq 0$ and $l\geq 1$ with $p\nmid l$. 
If $a_m\neq 0$, 
then we have $\degs (a_{(l-1)p^e})=p^e$.

\noindent{\rm (iii)} 
If $\sigma (a_i)=a_i$ for all $1\leq i\leq m$, 
then $f(x):=\sigma (a)-a
=\sum _{i=1}^ma_ix^i$ is {\rm additive}, 
i.e., 
satisfies $f(x+y)=f(x)+f(y)$. 
\end{lem}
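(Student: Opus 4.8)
The plan is to deduce all three parts from a single identity obtained by comparing coefficients in {\rm (E2)}. Expanding the left-hand side of {\rm (E2)} as $\sum_{i,j\ge 0}\delta _j(a_i)x^jy^i$ and the right-hand side as $\sum_{n=0}^m a_n\sum_j\binom{n}{j}x^jy^{n-j}$, and then matching the coefficient of $x^jy^i$ on the two sides with the convention $a_n=0$ for $n>m$, one gets
$$
\delta _j(a_i)=\binom{i+j}{j}\,a_{i+j}\qquad(i,j\ge 0).
$$
I expect each of (i), (ii), (iii) to be a short consequence of this formula, so the real content of the lemma is this coefficient comparison together with one arithmetic subtlety in part (ii).

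Parts (i) and (iii) should be immediate. For (i): if $j>m-i$, then $i+j>m$, so $a_{i+j}=0$ and hence $\delta _j(a_i)=0$; therefore $\degs (a_i)\le m-i$. For (iii): under the hypothesis $\sigma (a_i)=a_i$ for $1\le i\le m$, the left-hand side of {\rm (E2)} collapses to $\sigma (a_0)+\sum_{i=1}^m a_iy^i=a+f(x)+f(y)$ (using $a_0=a$, so $\sigma (a)=a+f(x)$), while the right-hand side is $a_0+\sum_{i=1}^m a_i(x+y)^i=a+f(x+y)$; cancelling $a$ yields $f(x+y)=f(x)+f(y)$.

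For (ii), which I expect to be the only delicate step, I would substitute $i=(l-1)p^e$ and $j=p^e$ into the displayed identity, so that $i+j=lp^e=m$ and $\delta _{p^e}(a_{(l-1)p^e})=\binom{lp^e}{p^e}\,a_m$; the point is then to see that this is nonzero when $a_m\ne 0$. Adding $(l-1)p^e$ and $p^e$ in base $p$ involves only a single digit addition, at the $p^e$-place, where the digit $(l-1)\bmod p\le p-2$ precisely because $p\nmid l$, so no carry occurs and Kummer's theorem gives $p\nmid\binom{lp^e}{p^e}$ (equivalently, Lucas' theorem gives $\binom{lp^e}{p^e}\equiv l\pmod p$). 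Since $\ch (A)=p$, multiplication by the integer $\binom{lp^e}{p^e}$ coincides on $A$ with multiplication by the nonzero residue $\binom{lp^e}{p^e}\bmod p$, which lies in the prime field ${\bf F}_p\subset A$ and is therefore invertible in $A$; hence $\delta _{p^e}(a_{(l-1)p^e})\ne 0$ whenever $a_m\ne 0$, giving $\degs (a_{(l-1)p^e})\ge p^e$, and combining this with part (i), which gives $\degs (a_{(l-1)p^e})\le m-(l-1)p^e=p^e$, yields the claimed equality. The step to watch is thus the characteristic-$p$ bookkeeping — both that the binomial coefficient survives reduction modulo $p$ (Kummer/Lucas, using $p\nmid l$) and that a nonzero element of ${\bf F}_p$ cannot annihilate $a_m$ in the possibly non-reduced ring $A$; both facts rest on $l$ being prime to $p$, while parts (i) and (iii) present no obstruction, being formal manipulations of {\rm (E2)}.
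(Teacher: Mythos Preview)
Your proof is correct and follows essentially the same approach as the paper: all three parts are read off from the coefficient comparison in {\rm (E2)}. The only cosmetic difference is in (ii), where you invoke Lucas/Kummer to see that $\binom{lp^e}{p^e}$ is a unit modulo $p$, while the paper gets the same coefficient directly by writing $(x+y)^{lp^e}=(x^{p^e}+y^{p^e})^l$ via Frobenius and reading off $l$ as the coefficient of $x^{p^e}y^{(l-1)p^e}$; both arguments amount to the same arithmetic fact.
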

\begin{proof}
Considering total degrees, 
(i) is clear from (E2). 
In the case (ii), 
we have $l\in A^*$ and 
$$
(x+y)^m=(x+y)^{lp^e}=(x^{p^e}+y^{p^e})^l
=y^{lp^e}+lx^{p^e}y^{(l-1)p^e}+\cdots +x^{lp^e}. 
$$
By (E2), 
this implies 
$\sigma (a_{(l-1)p^e})
=la_mx^{p^e}+(\text{terms of lower degree in $x$})$ 
with $la_m\neq 0$. 
In the case (iii), 
we have 
$$
\sum _{i=0}^m\sigma (a_i)y^i
=\sigma (a)+\sum _{i=1}^ma_iy^i
=\sum _{i=0}^ma_ix^i+\sum _{i=1}^ma_iy^i
=a_0+f(x)+f(y), 
$$
since $a_0=a$ by (E1). 
Hence, 
we know by (E2) that $f(x)$ is additive. 
\end{proof}

For each integer $n\geq 2$, 
let $d(n)$ be the greatest common divisor of 
the binomial coefficients 
$\binom{n}{i}$ for $1\leq i<n$. 
If $n=p^d$ for some prime number $p$ 
and $d\geq 1$, 
then we have $d(n)=p$, 
since $p^2\nmid \binom{p^d}{p^{d-1}}$. 
Otherwise, 
we have $d(n)=1$. 
Hence, 
the following lemma holds.

\begin{lem}\label{lem:binom}
For an integer $n\geq 2$ and $a\in A\sm \zs $, 
we have $a(x+y)^n=a(x^n+y^n)$ 
if and only if there exist a prime number $p$ 
and $d\geq 1$ such that $n=p^d$ 
and $\{ l\in \Z \mid la=0\} =p\Z $. 
\end{lem}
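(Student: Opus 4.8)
The plan is to reduce the polynomial identity to a single divisibility condition involving the integer $d(n)$ computed in the paragraph just before the lemma, and then read off both implications from the dichotomy for $d(n)$ recorded there.

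First I would expand in $A[x,y]$:
$a(x+y)^n-a(x^n+y^n)=\sum_{i=1}^{n-1}\binom{n}{i}ax^iy^{n-i}$,
using that the outer terms $\binom{n}{0}$ and $\binom{n}{n}$ cancel against $a(x^n+y^n)$. Since $A[x,y]$ is a free $A$-module on the monomials, the polynomials $x^iy^{n-i}$ for $1\le i\le n-1$ are linearly independent over $A$, so $a(x+y)^n=a(x^n+y^n)$ holds if and only if $\binom{n}{i}a=0$ for every $1\le i<n$. Next I would note that this whole family of equations is equivalent to the single equation $d(n)a=0$: one direction is immediate from $d(n)\mid\binom{n}{i}$, and the other follows by writing $d(n)$ as a $\Z$-linear combination of the $\binom{n}{i}$, which is possible by the definition of the greatest common divisor. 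Thus the statement reduces to: for $a\in A\sm\zs$, the identity holds iff $d(n)a=0$.

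Now I would invoke the dichotomy recalled before the lemma: $d(n)=p$ when $n=p^d$ for a prime $p$ and some $d\ge 1$, and $d(n)=1$ otherwise. For the ``only if'' direction, assume $d(n)a=0$. If $n$ were not a prime power then $d(n)=1$ would force $a=0$, contradicting $a\in A\sm\zs$; hence $n=p^d$ and $pa=0$. The set $\{l\in\Z\mid la=0\}$ is a subgroup of $\Z$, hence equals $m\Z$ for some $m\ge 0$; from $pa=0$ we get $m\mid p$, and from $a\neq 0$ we get $m\neq 1$, so $m=p$, which is the asserted conclusion. For the ``if'' direction, given $n=p^d$ and $\{l\in\Z\mid la=0\}=p\Z$ we have $pa=0$, hence $d(n)a=pa=0$, hence $\binom{n}{i}a=0$ for all $i$, hence $a(x+y)^n=a(x^n+y^n)$.

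I do not expect a genuine obstacle here. The two points that deserve a line of care are the linear independence of the monomials $x^iy^{n-i}$ over $A$, which lets one pass from vanishing of the polynomial to vanishing of each coefficient, and the passage from the family $\{\binom{n}{i}a=0\}$ to the single relation $d(n)a=0$, which rests on B\'ezout's identity for the gcd of several integers. Since the structural computation of $d(n)$ is already in place in the text, the remaining argument is essentially bookkeeping together with the elementary fact that subgroups of $\Z$ are cyclic.
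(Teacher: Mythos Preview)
Your proof is correct and follows the same route the paper takes: reduce the polynomial identity to $d(n)a=0$ via the gcd of the interior binomial coefficients, and then invoke the dichotomy $d(n)=p$ or $d(n)=1$ recorded just before the lemma. The paper's own argument is the single word ``Hence'' after that dichotomy, so you have simply written out the details (linear independence of monomials, B\'ezout, and the annihilator-subgroup step) that the text leaves to the reader.
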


Let $s$ be a local slice of $\sigma $. 
Then, 
by the minimality of $n:=\degs (s)$, 
we see from Lemma~\ref{lem:iterative} (i) 
that the coefficient of $x^i$ in $\sigma (s)$ 
belongs to $A^{\sigma }$ for $i=1,\ldots ,n$. 
Hence, 
$\sigma (s)-s$ is additive 
by Lemma~\ref{lem:iterative} (iii). 
If furthermore $s$ is a slice of $\sigma $, 
then this implies either $n=1$, 
or $p:=\ch (A)$ is a prime number 
and $n=p^d$ for some $d\geq 1$ 
by Lemma~\ref{lem:binom} with $a=1$.

Now, let us prove Theorem~\ref{thm:slice}. 
First, 
we show that each $a\in A\sm \zs $ belongs to $A^{\sigma }[s]$ 
by induction on $m:=\degs (a)$. 
If $m=0$, 
then $a$ belongs to $A^{\sigma }$. 
Assume that $m>0$. 
We show that $n:=\degs (s)$ divides $m$. 
We may assume that $n\geq 2$. 
Then, 
$p:=\ch (A)$ is a prime number, 
and $n=p^d$ 
for some $d\geq 1$ 
as mentioned. 
Write $m=lp^e$ with $p\nmid l$ and $e\geq 0$. 
Then, 
$\degs (b)=p^e$ holds for some $b\in A$ 
by Lemma~\ref{lem:iterative} (ii). 
By the minimality of $n$, 
it follows that $n=p^d$ divides $p^e$, 
and hence divides $m$. 
Now, 
set $a':=\lcs (a)$ and $c:=a-a's^{m/n}$. 
Then, 
the degree $\degs (c)$ of 
$\sigma (c)=\sigma (a)-a'\sigma (s)^{m/n}$ 
is less than $m$. 
Hence, 
$c$ belongs to $A^{\sigma }[s]$ 
by induction assumption. 
Thus, 
$a$ belongs to $A^{\sigma }[s]$, 
since so does $a's^{m/n}$. 
Therefore, 
we have $A=A^{\sigma }[s]$. 
Next, 
suppose that $A^{\sigma }[s]$ is not the polynomial ring in 
$s$ over $A^{\sigma }$. 
Then, 
there exist $m\geq 1$ and $a_0,\ldots ,a_m\in A^{\sigma }$ 
with $a_m\neq 0$ such that $\sum _{i=0}^ma_is^i=0$. 
Since $\sum _{i=0}^ma_i\sigma (s)^i=0$, 
and $\sigma (s)$ is a monic polynomial 
of positive degree, 
we are led to a contradiction. 
This completes the proof of Theorem~\ref{thm:slice}.

The following corollary is a consequence of 
Theorem~\ref{thm:slice}, 
since $\sigma $ extends to a nontrivial 
exponential map $\tilde{\sigma }$ on $A[a^{-1}]$ 
with $A[a^{-1}]^{\tilde{\sigma }}=A^{\sigma }[a^{-1}]$ 
which has a slice $a^{-1}s$.

\begin{cor}\label{cor:local slice}
Let $s$ be a local slice of $\sigma $ 
such that $a:=\lcs (s)$ is a nonzero divisor of $A$. 
Then, 
$A[a^{-1}]$ is the polynomial ring in $s$ over 
$A^{\sigma }[a^{-1}]$. 
Hence, 
we have $\trd _{A^{\sigma }}A=1$ 
if $A$ is a domain. 
\end{cor}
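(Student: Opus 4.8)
The plan is to deduce this from Theorem~\ref{thm:slice} by localizing at $a$, exactly as the sentence preceding the corollary suggests. The first task is to produce the exponential map $\tilde{\sigma}$ on $A[a^{-1}]$. Since $a=\lcs(s)\in A^{\sigma}$ we have $\sigma(a)=a$, so the composite of $\sigma\colon A\to A[x]$ with the localization map $A[x]\hookrightarrow A[a^{-1}][x]$ carries $a$ to a unit; by the universal property of localization it extends uniquely to a ring homomorphism $\tilde{\sigma}\colon A[a^{-1}]\to A[a^{-1}][x]$. To check {\rm (E1)} for $\tilde{\sigma}$, note that it asserts $\pi\circ\tilde{\sigma}=\id$, where $\pi$ is evaluation at $x=0$; both sides are ring homomorphisms and they agree on the generating set $A\cup\{a^{-1}\}$ by {\rm (E1)} for $\sigma$, hence everywhere. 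Condition {\rm (E2)} for $\tilde{\sigma}$ is, likewise, the equality of two ring homomorphisms $A[a^{-1}]\to A[a^{-1}][x,y]$ --- one obtained by applying $\tilde{\sigma}$ to the coefficients of $\tilde{\sigma}(b)$ after renaming $x$ to $y$, the other by the substitution $x\mapsto x+y$ in $\tilde{\sigma}(b)$ --- and these agree on $A$ by {\rm (E2)} for $\sigma$ and send $a^{-1}$ to the common value $a^{-1}$, so they agree on all of $A[a^{-1}]$.

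Next I would pin down the invariants and the slice. Because $a\in A^{\sigma}$, clearly $A^{\sigma}[a^{-1}]\subseteq A[a^{-1}]^{\tilde{\sigma}}$. For the reverse inclusion, note that $a$, being a nonzero divisor of $A$, is a nonzero divisor of $A[x]$, so $A[x]\to A[a^{-1}][x]$ is injective; thus $\tilde{\sigma}(b/a^n)=b/a^n$ forces $\sigma(b)=b$ in $A[x]$, i.e.\ $b\in A^{\sigma}$, and so $A[a^{-1}]^{\tilde{\sigma}}=A^{\sigma}[a^{-1}]$. Since $\tilde{\sigma}(b/a^n)=\sigma(b)a^{-n}$ and $a$ is a nonzero divisor, we get $\degs(b/a^n)=\degs(b)$ and $\lcs(b/a^n)=a^{-n}\lcs(b)$; in particular $\tilde{\sigma}(a^{-1}s)=a^{-1}\sigma(s)$ has degree $\degs(s)\ge1$ and leading coefficient $a^{-1}\lcs(s)=1$, so $\tilde{\sigma}(a^{-1}s)\neq a^{-1}s$, whence $\tilde{\sigma}$ is nontrivial and $a^{-1}s\notin A[a^{-1}]^{\tilde{\sigma}}$. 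Finally, any element of $A[a^{-1}]\sm A^{\sigma}[a^{-1}]$ can be written $b/a^n$ with $b\in A\sm A^{\sigma}$ (otherwise the fraction would lie in $A^{\sigma}[a^{-1}]$), so $\degs(b/a^n)=\degs(b)\ge\degs(s)$ by the minimality of the local slice $s$ of $\sigma$; hence $a^{-1}s$ is a local slice of $\tilde{\sigma}$ with leading coefficient $1$, i.e.\ a slice of $\tilde{\sigma}$.

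Now Theorem~\ref{thm:slice} applies: $A[a^{-1}]$ is the polynomial ring in $a^{-1}s$ over $A[a^{-1}]^{\tilde{\sigma}}=A^{\sigma}[a^{-1}]$, which --- since $a^{-1}$ is a unit of $A^{\sigma}[a^{-1}]$ --- is the same as the polynomial ring in $s$ over $A^{\sigma}[a^{-1}]$. This is the first assertion. For the last sentence, if $A$ is a domain then $a\neq0$ is automatically a nonzero divisor, $A$ and $A[a^{-1}]$ share a field of fractions, and so do $A^{\sigma}$ and $A^{\sigma}[a^{-1}]$ (as $a\in A^{\sigma}$); since $A[a^{-1}]=A^{\sigma}[a^{-1}][s]$ is a polynomial ring in the single variable $s$, we conclude $\trd_{A^{\sigma}}A=1$.

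I expect the only point requiring care to be the verification that $\tilde{\sigma}$ satisfies {\rm (E1)} and {\rm (E2)} --- that is, that these ``coalgebra-type'' identities descend to the localization. I would handle this uniformly by the observation that each side of each identity is a ring homomorphism out of $A[a^{-1}]$, so it suffices to compare them on the generators $A\cup\{a^{-1}\}$, where the identities for $\sigma$ together with $\sigma(a)=a$ do the work; manipulating the defining series termwise over the localization would be messier and is unnecessary. Everything else is routine bookkeeping with $\degs$ and $\lcs$, relying on the nonzero-divisor hypothesis on $a$.
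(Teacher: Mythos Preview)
Your argument is correct and follows precisely the route the paper sketches in the sentence preceding the corollary: extend $\sigma$ to $\tilde{\sigma}$ on $A[a^{-1}]$, identify $A[a^{-1}]^{\tilde{\sigma}}=A^{\sigma}[a^{-1}]$, observe that $a^{-1}s$ is a slice of $\tilde{\sigma}$, and invoke Theorem~\ref{thm:slice}. You have simply supplied the details (the universal-property construction of $\tilde{\sigma}$, the verification of (E1)--(E2) via comparison of ring homomorphisms on generators, and the bookkeeping for invariants, degrees, and leading coefficients) that the paper leaves implicit.
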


\noindent
Department of Mathematics and Information Sciences \\ 
Tokyo Metropolitan University \\ 
1-1  Minami-Osawa, Hachioji, 
Tokyo 192-0397, Japan\\
kuroda@tmu.ac.jp


\begin{thebibliography}{} 


\bibitem{AEH}
S. S. Abhyankar, W. Heinzer\ and\ P. Eakin, 
On the uniqueness of the coefficient ring 
in a polynomial ring, 
J. Algebra {\bf 23} (1972), 310--342. 


\bibitem{tony}
A. J. Crachiola, 
Cancellation for two-dimensional unique factorization domains, 
J. Pure Appl. Algebra {\bf 213} (2009), 1735--1738. 


\bibitem{CML}
A. J. Crachiola\ and\ L. G. Makar-Limanov, 
An algebraic proof of a cancellation theorem for surfaces, 
J. Algebra {\bf 320} (2008), 3113--3119. 


\bibitem{Fbook}G.~Freudenburg, 
{\it Algebraic theory of locally nilpotent derivations}, 
Encyclopaedia Math. Sci., 
136, Springer, Berlin, 2006. 


\bibitem{Kojima}
H. Kojima, 
Locally finite iterative higher derivations on $k[x,y]$, 
to appear in Colloq. Math. 


\if0
\bibitem{Matsumura}
H. Matsumura, 
{\it Commutative ring theory}, 
translated from the Japanese by M. Reid, 
second edition, 
Cambridge Studies in Advanced Mathematics, 
8, Cambridge Univ. Press, Cambridge, 1989.
\fi


\bibitem{Miyanishi nagoya}
M. Miyanishi, 
$G\sb{a}$-action of the affine plane, 
Nagoya Math. J. {\bf 41} (1971), 97--100. 


\if0
\bibitem{Miyanishi}M.~Miyanishi, 
{\it Curves on rational and unirational surfaces}, 
Tata Institute of Fundamental Research Lectures 
on Mathematics and Physics, 60, 
Tata Inst. Fund. Res., Bombay, 1978. 
\fi

\bibitem{Nakai}
Y. Nakai, 
On locally finite iterative higher derivations, 
Osaka J. Math. {\bf 15} (1978), 655--662. 


\bibitem{Rentschler}R. Rentschler, 
Op\'erations du groupe additif sur le plan affine, 
C. R. Acad. Sci. Paris S\'er. A-B {\bf 267} (1968), 
384--387. 

\end{thebibliography}
\end{document}